\def\NZQ{\Bbb}               % the font for N,Z,Q,R,C
\def\ZZ{{\NZQ Z}}
\def\PP{{\NZQ P}}
\def\frk{\frak}               % font for "Fracture"
\def\pp{{\frk p}}
\def\qq{{\frk q}}
\def\mm{{\frk m}}
\def\Phi{{\frk n}}
\def\Phi{{\frk N}}
\def\opn#1#2{\def#1{\operatorname{#2}}} % to make operators
\opn\chara{char} \opn\length{\ell} \opn\pd{pd} \opn\rk{\lk}\opn\link{link}
\opn\projdim{proj\,dim} \opn\injdim{inj\,dim} \opn\rank{rank}
\opn\depth{depth} \opn\and{and} \opn\grade{grade}
\opn\height{height} \opn\embdim{emb\,dim} \opn\codim{codal}
\opn\Tr{Tr} \opn\bigrank{big\,rank}
\opn\superheight{superheight}\opn\lcm{lcm}
\opn\trdeg{tr\,deg}%
\opn\reg{reg} \opn\lreg{lreg} \opn\ini{in}
\opn\div{div} \opn\Div{Div} \opn\cl{cl} \opn\Cl{Cl}
\opn\Spec{Spec} \opn\Supp{Supp} \opn\supp{supp} \opn\Sing{Sing}
\opn\Ass{Ass} \opn\Min{Min}
\opn\Ann{Ann} \opn\Rad{Rad} \opn\Soc{Soc}
\opn\Im{Im}
 \opn\Ker{Ker} \opn\Coker{Coker} \opn\Am{Am} \opn \inf{inf}
\opn\Hom{Hom} \opn\Tor{Tor} \opn\Ext{Ext} \opn\End{End} \opn\cd{cd}
\opn\Aut{Aut} \opn\id{id}
\opn\nat{nat}
\opn\pff{pf}%   \pf exists already
\opn\Pf{Pf} \opn\GL{GL} \opn\SL{SL} \opn\mod{mod} \opn\ord{ord}
\opn\cl{cl} \opn\conv{conv} \opn\ext{ext} \opn\rad{rad}
\opn\star{star} \opn\red{red}\opn\H{H}
\opn\aff{aff} \opn\con{conv} \opn\relint{relint} \opn\st{st}
\opn\lk{lk} \opn\cn{cn} \opn\core{core} \opn\vol{vol}
\opn\link{link} \opn\star{star}
\opn\gr{gr}
\def\pot#1#2{#1[\kern-0.28ex[#2]\kern-0.28ex]}
\opn\dirlim{\underrightarrow{\lim}}
\opn\inivlim{\underleftarrow{\lim}}
\let\dirsum=\oplus
\let\tensor=\otimes
\let\iso=\cong
\let\to=\rightarrow
\def\Implies{\ifmmode\Longrightarrow \else
     \unskip${}\Longrightarrow{}$\ignorespaces\fi}
\def\implies{\ifmmode\Rightarrow \else
     \unskip${}\Rightarrow{}$\ignorespaces\fi}
\def\iff{\ifmmode\Longleftrightarrow \else
     \unskip${}\Longleftrightarrow{}$\ignorespaces\fi}
\newtheorem{Theorem}{Theorem}[section]
\newtheorem{Lemma}[Theorem]{Lemma}
\newtheorem{Corollary}[Theorem]{Corollary}
\newtheorem{Proposition}[Theorem]{Proposition}
\newtheorem{Remark}[Theorem]{Remark}
\newtheorem{Example}[Theorem]{Example}
\newtheorem{Definition}[Theorem]{Definition}
\newtheorem{Question}[Theorem]{Question}
\newtheorem{Fact}[Theorem]{Fact}
\let\epsilon\varepsilon
\let\phi=\varphi
\let\kappa=\varkappa
\def\qed{\ifhmode\textqed\fi
   \ifmmode\ifinner\quad\qedsymbol\else\dispqed\fi\fi}
\def\textqed{\unskip\nobreak\penalty50
    \hskip2em\hbox{}\nobreak\hfil\qedsymbol
    \parfillskip=0pt \finalhyphendemerits=0}
\def\dispqed{\rlap{\qquad\qedsymbol}}
\opn\dis{dis}
\def\pnt{{\raise0.5mm\hbox{\large\bf.}}}
\begin{document}
\title{On the structure of sequentially Cohen--Macaulay bigraded modules}

\author{Leila Parsaei Majd,  Ahad Rahimi}

\subjclass[2000]{ 16W50, 13C14, 13D45, 16W70. }
\keywords{  Dimension filtration, Sequentially Cohen--Macaulay, Cohomological dimension, Bigraded modules, Cohen--Macaulay. }
\address{ Leila Parsaei Majd, Department of Mathematics,  Faculty of Science,   Razi University, Baghe abrisham, Kermanshah,
 Iran and, Department of Mathematics, Faculty of Science, Shahid Rajaee Teacher Training University,  P. O. Box: 16785-136, Tehran, Iran.
}\email{leila.parsaei84@yahoo.com}
 \address{ Ahad Rahimi, Department of Mathematics, Faculty of Science, Razi University, Baghe abrisham,  Kermanshah,
 Iran and,
 School of Mathematics, Institute for Research in Fundamental Sciences
(IPM), P. O. Box: 19395-5746, Tehran, Iran.
}\email{ahad.rahimi@razi.ac.ir}
\begin{abstract}
Let $K$ be a field and $S=K[x_1,\ldots,x_m, y_1,\ldots,y_n]$ be the standard bigraded polynomial ring  over $K$. In this paper,  we explicitly describe the structure of finitely generated bigraded "sequentially Cohen--Macaulay" $S$-modules with respect to $Q=(y_1,\ldots,y_n)$. Next, we give a characterization of sequentially Cohen--Macaulay modules with respect to $Q$ in terms of local cohomology modules. Cohen--Macaulay modules that are sequentially Cohen--Macaulay with respect to $Q$ are considered.
\end{abstract}
\maketitle
\section*{Introduction}
Let $K$ be a field and $S=K[x_1,\ldots,x_m,y_1,\ldots,y_n]$  be the standard bigraded $K$-algebra with $\deg x_i=(1,0)$ and $\deg y_j=(0,1)$ for all $i$ and $j$.  We set the bigraded irrelevant ideals $P=(x_1, \dots, x_m)$ and $Q=(y_1, \dots,  y_n)$. Let $M$ be a finitely
 generated bigraded $S$-module. The largest integer $k$ for which $H^k_ Q
(M)\neq 0$, is called the cohomological dimension of $M$ with
respect to $Q$ and denoted by $\cd(Q, M)$.
A finite filtration $
\mathcal{D}:  0=D_0\varsubsetneq D_1
\varsubsetneq
 \dots  \varsubsetneq  D_r=M
 $
 of bigraded submodules of $M$, is called the
dimension filtration of $M$ with respect to $Q$ if $D_{i-1}$ is the
largest bigraded submodule of $D_i$ for which $\cd(Q,D_{i-1})<\cd(Q, D_i)$
for all $i=1, \dots, r$, see  \cite{AR3}.
 In Section 1,  we explicitly describe the structure of the submodules $D_i$ that extends \cite[Proposition 2.2]{S}. In fact,  it is shown that $D_i=\bigcap_{\pp_j\not \in B_{i, Q}}N_j$
  for $ i=1, \ldots, r-1$ where $0 =\bigcap_{j=1}^sN_j $ is a reduced primary decomposition of $0$
in $M$ with $N_j$ is $\pp_j$-primary for $j = 1, \ldots, s$ and
\[
B_{i, Q}=\{ \pp\in \Ass(M): \cd(Q, S/\pp)\leq \cd(Q, D_i)\}.
\]
 In \cite{AR2}, we say  $M$ is Cohen--Macaulay with respect to $Q$, if $\grade(Q,  M)=\cd(Q, M).$ A finite filtration $\mathcal{F}$:
$0=M_0\varsubsetneq M_1 \varsubsetneq
 \dots  \varsubsetneq M_r=M$
 of $M$ by
bigraded submodules $M$, is called a Cohen--Macaulay filtration with respect to $Q$ if
% \begin{itemize}
%\item[{(a)}]
each quotient $M_i/M_{i-1}$ is Cohen--Macaulay with respect to $Q$ and
% \item[{(b)}]
 \[
 0 \leq \cd(Q, M_1/M_0)<\cd(Q, M_2/M_1)< \dots< \cd(Q, M_r/M_{r-1}).
 \]
% \end{itemize}
 If $M$ admits a
Cohen--Macaulay filtration with respect to $Q$, then we say  $M$ is sequentially Cohen--Macaulay with respect to $Q$, see \cite{AR3}.
   Note that if $M$ is sequentially Cohen--Macaulay with respect to $Q$, then the filtration $\mathcal{F}$ is uniquely determined and it is just the dimension filtration of $M$ with respect to $Q$, that is, $\mathcal{F}=\mathcal{D}$.
      In Section ~2,  we give a characterization of sequentially Cohen--Macaulay modules with respect to $Q$ in terms of local cohomology modules which extends \cite[Corollary 4.4]{CC1} and \cite[Corollary 3.10]{CC2}. We apply this result and the description of the submodules $M_i$ mentioned earlier, showing that $S/I$ is sequentially Cohen--Macaulay with respect to $P$ and $Q$ where $I$ is the Stanley-Reisner ideal that corresponds to the natural triangulation of the projective plane $\PP^2$. Here $S=K[x_1, x_2, x_3, y_1, y_2, y_3]$, $P=(x_1, x_2, x_3)$ and $Q=(y_1, y_2, y_3)$. Note that $S/I$ is Cohen--Macaulay of dimension 3, if $\chara K\neq 2$.

In \cite{AR2} we have shown that if $M$ is a finitely generated bigraded Cohen--Macaulay $S$-module which is Cohen--Macaulay with respect to $P$, then $M$ is Cohen--Macaulay with respect to $Q$. Inspired by this fact and the above example we have the following question: Let $I\subseteq S$ be a monomial ideal. Suppose $S/I$ is Cohen--Macaulay. If $S/I$ is sequentially Cohen--Macauly with respect to $P$, is $S/I$ sequentially Cohen--Macaulay with respect to $Q$?
We do not know the answer of this question yet, however in the last section, we obtain some properties of a Cohen--Macaulay filtration with respect to $Q$ in general provided that the module itself is Cohen--Macaulay, see Propositions~\ref{cm} and  ~\ref{cm1}.  Inspired by Proposition \ref{cm1}, we make the following question: Let  $M$ be a finitely generated bigraded Cohen--Macaulay $S$-module such that $H^k_Q(M)\neq 0$ for all $\grade(Q, M)\leq k \leq \cd(Q, M)$.   Is $H^s_P(M)\neq 0$ for all $\grade(P, M) \leq s \leq \cd(P, M)$?  Of course the question has positive answer in the case that $M$ has only one(two) non-vanishing local cohomology with respect to $Q$. The projective plane $\PP^2$ would also be the case as module with three non-vanishing local cohomology.

\section{The dimension filtration with respect to $Q$}
 Let $K$ be a field and $S=K[x_1,\ldots,x_m,y_1,\ldots,y_n]$  the standard bigraded polynomial ring over $K$. In other words,  $\deg x_i=(1,0)$ and $\deg y_j=(0,1)$ for all $i$ and $j$. We set the bigraded irrelevant ideals
$P=(x_1, \dots, x_m)$ and $Q=(y_1, \dots,  y_n)$, and let $M$ be a
finitely
 generated bigraded $S$-module.
We denote by $\cd(Q, M)$ the cohomological dimension of $M$ with
respect to $Q$ which is the largest integer $i$ for which $H^i_ Q
(M)\neq 0$. Notice that $0\leq \cd(Q, M) \leq n.$

We recall the following facts which will be used in the sequel.

\begin{Fact}
\label{cd1}{\em
\[
\grade(P, M)\leq \dim M-\cd(Q, M),
\]
 and the equality holds if $M$ is Cohen--Macaulay, see \cite[Formula 5]{AR2}.}
\end{Fact}
Let $q\in \ZZ$. In \cite{AR2}, we say $M$ is relative Cohen--Macaulay with respect to $Q$ if $H^i_Q(M) = 0$ for all $i \neq q$. In other words, $\grade(Q, M) = \cd(Q, M) = q$. From now on,  we omit the word "relative" for simplicity and say $M$ is Cohen--Macaulay with respect to $Q$.
\begin{Fact}
\label{cd2}{\em
 If $M$ is Cohen--Macaulay with respect to $Q$ with $ |K|=\infty$, then
\[
\cd(P, M)+\cd(Q, M)=\dim M,
\]
see \cite[Theorem 3.6]{AR2}. }
%In view of (\ref{F1}), the converse holds if $M$ is Cohen--Macaulay.}
\end{Fact}

\begin{Fact}{\em
\label{cd3}
The exact sequence  $ 0
\rightarrow M' \rightarrow M \rightarrow M'' \rightarrow 0$ of finitely generated bigraded $S$-modules yields
\[
 \cd(Q,M)=\max\{\cd(Q, M'), \cd(Q,M'')\},
\]
see the general version of \cite[Proposition 4.4]{CJR}.}
\end{Fact}

\begin{Fact}{\em
\label{cd4}
  \[
 \cd(Q,M)  =  \max \{\cd (Q, S/{\pp}): \pp \in \Ass(M)\},
 \]
 see the general version of \cite[Corollary 4.6]{CJR}.}
\end{Fact}

For a finitely generated bigraded $S$-module $M$,  there is a unique largest bigraded submodule $N$ of $M$ for which $\cd(Q, N)<\cd(Q, M)$, see \cite[Lemma 1.9]{AR3}.  We  recall the following definition from \cite{AR3}.
\begin{Definition}
\label{1} {\em  We call a filtration $\mathcal{D}$: $0=D_0\varsubsetneq D_1
\varsubsetneq
 \dots  \varsubsetneq  D_r=M$ of bigraded submodules of $M$ the
dimension filtration of $M$ with respect to $Q$ if $D_{i-1}$ is the
largest bigraded submodule of $D_i$ for which $\cd(Q,D_{i-1})<\cd(Q, D_i)$
for all $i=1, \dots, r$.}
\end{Definition}
\begin{Remark}
\label{F6}{\em
Let $\mathcal{D}$ be the dimension filtration of $M$ with respect to $Q$. For all $i$, the exact sequence $0\to D_{i-1}\to D_i\to D_i/D_{i-1}\to 0$  by using Fact \ref{cd3} yields
\[
\cd(Q, D_i)=\max\{ \cd(Q, D_{i-1}), \cd(Q, D_i/D_{i-1})\}=\cd(Q, D_i/D_{i-1}).
\]
Thus, $\cd(Q, D_{i-1}/D_{i-2})<\cd(Q, D_i/D_{i-1})$ for all $i$.
}
\end{Remark}

Let  $\mathcal{D}$ be the dimension filtration of $M$ with respect to $Q$.  We set
\[
\; \; B_{i, Q}=\{ \pp\in \Ass(M): \cd(Q, S/\pp)\leq \cd(Q, D_i)\},\; \;  I_{i, Q}=\prod_{\pp \in B_{i, Q}}\pp
\]
and
\[
A_{i, Q}=\{ \pp \in \Ass(M): \pp\in V(I_{i, Q})\} \quad \text{for}\quad  i=1, \ldots, r.
\]
\begin{Lemma}
\label{V}
Let the notation be as above. Then the following statements hold
\[
 A_{i, Q}=B_{i, Q}=\Ass(D_i) \quad \text{for}\quad  i=1, \ldots, r.
\]
Consequently,
\[
 \Supp(D_i)\subseteq V(I_{i, Q}) \quad \text{for}\quad  i=1, \ldots, r.
\]
 \end{Lemma}
\begin{proof}
In order to show the first equality, we note that $ B_{i, Q}\subseteq A_{i, Q}$ for $i=1, \ldots, r$. Now let $\pp\in A_{i, Q}$. Then $\pp \in \Ass(M)$ with $I_{i, Q}\subseteq \pp$. Hence $\qq \subseteq \pp$ for some $\qq\in \Ass(M)$  with $ \cd(Q, S/\qq)\leq \cd(Q, D_i)$. The canonical epimorphism $S/\qq \to S/\pp$ yields $\cd(Q, S/\pp)\leq \cd(Q, S/\qq)$ by Fact \ref{cd3}. It follows that $\pp\in B_{i, Q}$ and hence  $A_{i, Q} \subseteq B_{i, Q}.$

To show the second equality, let $\pp \in B_{i, Q}$.  Then there is a submodule $N\subseteq M$ such that $N\iso S/\pp$ and $\cd(Q, S/\pp)\leq \cd(Q, D_i)$. Using Fact \ref{cd3} we have
\[
\cd(Q, N+D_i)=\max\{ \cd(Q, D_i), \cd(Q, N/(N\cap D_i))\}=\cd(Q, D_i),
\]
and hence $N\subseteq D_i$. This shows $\pp \in\Ass(D_i)$ and therefore $B_{i,Q}\subseteq \Ass(D_i)$. Now let $\pp \in \Ass(D_i).$ Then $\pp \in \Ass(M)$ and  $\cd(Q, S/\pp)\leq \cd(Q, D_i)$ by Fact \ref{cd4}. This shows $\pp \in B_{i, Q}$ and hence $\Ass(D_i)\subseteq B_{i, Q}.$

\end{proof}
In the following we describe the structure of the submodules $D_i$ in the dimension filtration of $\mathcal{D}$ with respect to $Q$
which extends \cite[Proposition 2.2]{S}.
\begin{Proposition}
\label{Decomposition0}
Let $\mathcal{D}$ be the dimension filtration of $M$ with respect to $Q$.  Then
 \[
 D_i=H^0_{I_{i, Q}}(M)=\bigcap_{\pp_j\not \in B_{i, Q}}N_j
 \]
 for $ i=1, \ldots, r-1$ where $0 =\bigcap_{j=1}^sN_j $ is a reduced primary decomposition of $0$
in $M$ with $N_j$ is $\pp_j$-primary for $j = 1, \ldots, s$.
\end{Proposition}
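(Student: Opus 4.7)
The plan is to establish the chain of equalities $D_i = H^0_{I_i}(M) = L_i$ with $L_i := \bigcap_{\pp_j \notin B_i} N_j$, by first identifying $H^0_{I_i}(M)$ with $L_i$ via the primary decomposition of $0$, and then proving $D_i = L_i$ by a double inclusion based on the cohomological characterization of the dimension filtration.

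For the identity $H^0_{I_i}(M) = L_i$, I would invoke the classical fact that $H^0_I(M) = \bigcap_{j : I \not\subseteq \pp_j} N_j$ for any ideal $I$. One direction uses that if $I \not\subseteq \pp_j$ and $y \in I \setminus \pp_j$, then $y$ is a non-zerodivisor on the $\pp_j$-primary module $M/N_j$, so $y^k x = 0$ forces $x \in N_j$; the other direction uses $\pp_j^{k_j} M \subseteq N_j$ for some $k_j$ when $I \subseteq \pp_j$, so a sufficiently high power of $I$ annihilates $L_i$ and therefore lands it in $\bigcap_j N_j = 0$. To match index sets, observe that $I_i = \prod_{\pp \in B_i} \pp$ is a product of primes, so $I_i \subseteq \pp_j$ iff some $\pp \in B_i$ sits inside $\pp_j$, iff $\pp_j \in V(I_i) \cap \Ass(M) = A_i$; by Lemma~\ref{V}(a), $A_i = B_i$, so $\{j : I_i \not\subseteq \pp_j\} = \{j : \pp_j \notin B_i\}$.

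For $L_i \subseteq D_i$, I would compute $\Ass(L_i) \subseteq B_i$ from the injection $L_i \hookrightarrow \bigoplus_{\pp_j \in B_i} M/N_j$ coming from $L_i \cap \bigcap_{\pp_j \in B_i} N_j = 0$; then (\ref{F4}) gives $\cd(Q, L_i) \leq \max\{\cd(Q, S/\pp) : \pp \in B_i\} \leq \cd(Q, D_i)$. A descending induction along the strict chain $\cd(Q, D_1) < \cdots < \cd(Q, D_r) = \cd(Q, M)$ shows that $D_i$ is in fact the largest submodule of all of $M$ (not merely of $D_{i+1}$) with cohomological dimension at most $\cd(Q, D_i)$: any submodule $N$ with $\cd(Q, N) < \cd(Q, M)$ lies in $D_{r-1}$ by the defining property of $D_{r-1}$, then $\cd(Q, N) < \cd(Q, D_{r-1})$ puts $N$ in $D_{r-2}$, and iterating lands $N$ in $D_i$. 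Applied to $N = L_i$, this gives $L_i \subseteq D_i$.

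For $D_i \subseteq L_i$, fix $\pp_j \notin B_i$ and suppose for contradiction $D_i \not\subseteq N_j$. The nonzero quotient $D_i/(D_i \cap N_j)$ embeds in the $\pp_j$-primary module $M/N_j$, so $\pp_j$ is its unique associated prime; by (\ref{F4}), $\cd(Q, D_i/(D_i \cap N_j)) \geq \cd(Q, S/\pp_j) > \cd(Q, D_i)$, where the strict inequality uses $\pp_j \notin B_i$. On the other hand, (\ref{F3}) applied to $0 \to D_i \cap N_j \to D_i \to D_i/(D_i \cap N_j) \to 0$ forces $\cd(Q, D_i/(D_i \cap N_j)) \leq \cd(Q, D_i)$, a contradiction. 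The main obstacle is really just extracting the above maximality characterization of $D_i$ from its recursive definition; once that is in hand, everything else reduces to standard primary-decomposition bookkeeping together with the cohomological identities recorded in Remark~\ref{F6}.
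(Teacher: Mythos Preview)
Your proof is correct. Both arguments rest on the same two ingredients --- the computation $\Ass L_i \subseteq B_i$ (equivalently $\Ass H^0_{I_i}(M) = B_i$) combined with (\ref{F4}) to bound the cohomological dimension, and a descending induction exploiting the recursive maximality built into the definition of the $D_i$. The organization differs, however: the paper first proves $D_i \subseteq H^0_{I_i}(M)$ via Lemma~\ref{V}(b) (that is, $\Supp(D_i) \subseteq V(I_i)$), then obtains the reverse inclusion by the decreasing induction, and only at the end invokes \cite[Proposition~3.13]{Ei} for the identity $H^0_{I_i}(M) = \bigcap_{\pp_j \notin B_i} N_j$; you instead establish $H^0_{I_i}(M) = L_i$ first from scratch, prove $L_i \subseteq D_i$ by the same inductive mechanism, and then give a direct argument that $D_i \subseteq N_j$ for each $\pp_j \notin B_i$ using only (\ref{F3}) and (\ref{F4}), bypassing Lemma~\ref{V}(b) entirely. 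Your route is slightly more self-contained (it needs neither part~(b) of the preparatory lemma nor the external reference for the primary-decomposition identity), while the paper's packaging keeps the role of $H^0_{I_i}$ more visibly in the foreground.
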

\begin{proof}
In order to prove the first equality,  we have $V(\Ann(D_i))=\Supp(D_i)\subseteq V(I_{i, Q})$ for $ i=1, \ldots, r-1$ by Lemma \ref{V}.
 Since $I_{i, Q}$ is finitely generated, it follows that $I_{i, Q}^{k_i}\subseteq \Ann(D_i)$ for some integer $k_i$ and hence  $I_{i, Q}^{k_i}D_i=0$ for some $k_i$. Thus  $D_i=H^0_{I_{i, Q}}(D_i)\subseteq H^0_{I_{i, Q}}(M)$ for $ i=1, \ldots, r-1$.

 Now we prove the equality by decreasing induction on $i$.  For $i=r-1$, we assume that $D_{r-1}\varsubsetneq H^0_{I_{r-1, Q}}(M)\subseteq D_r=M$. It follows from the definition dimension filtration that $\cd(Q,H^0_{I_{r-1, Q}}(M))=\cd(Q, M).$
Note that
\[
\Ass H^0_{I_{i, Q}}(M)=A_{i,Q}=\Ass(D_i) \quad \text{for}\quad  i=1, \ldots, r-1
\]
 by \cite[Proposition 3.13]{Ei}(c) and Lemma \ref{V}. It follows that $\cd(Q,H^0_{I_{r-1, Q}}(M)) =\cd(Q, D_{r-1, Q})$, and hence $\cd(Q, D_{r-1, Q})=\cd(Q, M)$, a contradiction. Thus $D_{r-1, Q}=H^0_{I_{r-1, Q}}(M)$.
Now let $1<i<r-1$, and assume that $D_i=H^0_{I_{i, Q}}(M)$. We show  $D_{i-1}=H^0_{I_{i-1, Q}}(M)$. Assume  $D_{i-1}\varsubsetneq H^0_{I_{i-1, Q}}(M)$. As   $H^0_{I_{i-1, Q}}(M)\subseteq H^0_{I_{i, Q}}(M)=D_{i}$, we have  $\cd(Q, H^0_{I_{i-1, Q}}(M))\geq \cd(Q, D_{i})$. Since $\Ass H^0_{I_{i-1, Q}}(M)=\Ass(D_{i-1})$, it follows that $\cd(Q, D_{i-1})=\cd(Q, H^0_{I_{i-1, Q}}(M))\geq \cd(Q, D_i)$, a contradiction. Therefore, $D_{i-1}=H^0_{I_{i-1, Q}}(M)$. The second equality follows from Lemma \ref{V} and \cite[Proposition 3.13]{Ei}(a).
\end{proof}
\begin{Remark}
\label{unmixed}{\em
Let $\mathcal{D}$ be the dimension filtration of $M$ with respect to $Q$ with $\cd(Q, M)=q$. We call the submodule
\[
D_{r-1}=\bigcap_{\pp_j \not \in B_{r-1,Q}}N_j=\bigcap_{\cd(Q, S/\pp_j)=q}N_j,
\]
the {\em unmixed component of $M$ with respect to $Q$}  and denote it by $u_{ Q, M}(0)$. Notice that $u_{\mm, M}(0)=u_M(0)$ introduced by Schenzel in \cite{S}.
If  $M$ is relatively unmixed with respect to $Q$, that is,  $\cd(Q, M)=\cd(Q, S/\pp)$ for all $\pp \in \Ass(M)$, then by Proposition \ref{Decomposition0}  we have
\[
D_i=\bigcap_{\pp_j\not \in B_{i, Q}}N_j=\bigcap_{j=1}^sN_j=0 \quad \text{for all}\quad i<r.
\]
}
\end{Remark}

\begin{Corollary}
\label{ass}
Let $\mathcal{D}$ be the dimension filtration of $M$ with respect to $Q$. Then for $i=1, \dots, r$ we have
\[
\Ass(M/D_i)=\Ass(M)- \Ass(D_{i}).
\]
\end{Corollary}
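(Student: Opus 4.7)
My plan is to derive both statements directly from Proposition \ref{Decomposition0}, which identifies $D_i$ with $H^0_{I_i}(M)$ for $i=1,\ldots,r-1$, together with the standard description of the associated primes of a zeroth local cohomology module. Throughout, I would handle the easy extremal case $i=r$ separately: here $D_r=M$, and every $\pp\in\Ass(M)$ satisfies $\cd(Q,S/\pp)\leq \cd(Q,M)=\cd(Q,D_r)$ by (\ref{F4}), so $B_r=\Ass(M)$; correspondingly $M/D_r=0$, making (b) trivial.

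For part (a) with $i<r$, I would invoke the fact (cf.\ \cite[Proposition 3.13]{Ei}) that
\[
\Ass H^0_{I_i}(M) \;=\; \{\pp \in \Ass(M) \,:\, I_i \subseteq \pp\} \;=\; A_i.
\]
Combined with Proposition \ref{Decomposition0} and Lemma \ref{V}(a), this gives $\Ass(D_i)=\Ass H^0_{I_i}(M)=A_i=B_i$.

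For part (b) with $i<r$, I would use the complementary statement, namely that on the quotient the associated primes are exactly those not containing the ideal:
\[
\Ass\bigl(M/H^0_{I_i}(M)\bigr) \;=\; \{\pp \in \Ass(M) \,:\, I_i \not\subseteq \pp\}.
\]
This set is precisely $\Ass(M)\setminus A_i$, which equals $\Ass(M)\setminus B_i$ again by Lemma \ref{V}(a). Substituting $D_i=H^0_{I_i}(M)$ from Proposition \ref{Decomposition0} yields the claim.

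There is essentially no obstacle: the proof is a direct translation of the structural description of $D_i$ into statements about associated primes, once one has the standard dichotomy for $\Ass H^0_I(M)$ versus $\Ass(M/H^0_I(M))$. The only minor care is in handling the index $i=r$, which is not covered by Proposition \ref{Decomposition0} but is immediate from the definition of $B_r$ and (\ref{F4}).
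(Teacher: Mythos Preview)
Your proposal is correct and follows essentially the same route as the paper: both derive (a) and (b) from Proposition~\ref{Decomposition0}, Lemma~\ref{V}(a), and the standard facts $\Ass H^0_{I_i}(M)=A_i$ and $\Ass(M/H^0_{I_i}(M))=\Ass(M)-A_i$ from \cite[Proposition~3.13]{Ei}. Your explicit treatment of the boundary case $i=r$ is a welcome addition, since Proposition~\ref{Decomposition0} only covers $i\leq r-1$ and the paper's proof glosses over this point.
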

\begin{proof}
 The assertion follows from Proposition \ref{Decomposition0}, Lemma \ref{V} and the fact that $\Ass M/H^0_{I_{i, Q}}(M)=\Ass(M)-A_{i, Q}$, see \cite[Proposition 3.13]{Ei}(c).
\end{proof}

\section{ Sequentially Cohen--Macaulay with respect to $Q$}
We recall the following definition from \cite{AR3}.
\begin{Definition}
\label{seq1}{\em Let $M$ be a finitely generated bigraded
$S$-module. We call a finite filtration $\mathcal{F}$:
$0=M_0\varsubsetneq M_1 \varsubsetneq
 \dots  \varsubsetneq M_r=M$
 of $M$ by
bigraded submodules $M$ a Cohen--Macaulay filtration with
respect to $Q$ if
 \begin{itemize}
\item[{(a)}] Each quotient $M_i/M_{i-1}$ is Cohen--Macaulay with respect to $Q$;
 \item[{(b)}] $0 \leq \cd(Q, M_1/M_0)<\cd(Q, M_2/M_1)< \dots< \cd(Q, M_r/M_{r-1})$.
 \end{itemize}
 We call $M$ to
be {\em sequentially Cohen--Macaulay with respect to $Q$} if $M$ admits a
Cohen--Macaulay filtration with respect to $Q$.
   }
\end{Definition}
Note that if $M$ is sequentially Cohen--Macaulay with respect to $Q$, then the filtration $\mathcal{F}$ in the definition above is uniquely determined and it is just the dimension filtration of $M$ with respect to $Q$ defined in Definition \ref{1}, see \cite[Proposision 1.12]{AR3}.

We have the following characterization of sequentially Cohen--Macaulay modules with respect to $Q$ in terms of local cohomology modules which extends \cite[Corollary 4.4]{CC1} and \cite[Corollary 3.10]{CC2}.
\begin{Proposition}
\label{equivalent}
Let $\mathcal{D}$:
$0=D_0\varsubsetneq D_1 \varsubsetneq
 \dots  \varsubsetneq D_r=M$ be the dimension filtration of $M$ with respect to $Q$. Then the
following statements are equivalent:
 \begin{itemize}
\item[{(a)}]  $M$ is sequentially Cohen--Macaulay with respect to $Q$;
\item[{(b)}] $H^k_Q(M/D_{i-1})=0$ for $i=1, \dots, r$ and  $k<\cd(Q, D_i)$;
\item[{(c)}] $\grade(Q, M/D_{i-1})=\cd(Q, D_i)$ for $i=1, \dots, r$.
\end{itemize}
\end{Proposition}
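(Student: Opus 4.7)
The plan is to set $c_i := \cd(Q, M_i)$, which by Remark~\ref{F6} equals $\cd(Q, M_i/M_{i-1})$, and to exploit the strict chain $c_1<c_2<\cdots<c_r$ together with the long exact sequence in local cohomology associated to the short exact sequences
\begin{equation*}
0 \To M_i/M_{i-1} \To M/M_{i-1} \To M/M_i \To 0 \qquad (i=1,\dots,r),
\end{equation*}
where by convention $M/M_r=0$ so that the argument at $i=r$ degenerates.

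For (a)$\Rightarrow$(b), I would fix $i$ and show by ascending induction on $j\geq i-1$ that $H^k_Q(M_j/M_{i-1})=0$ for all $k<c_i$. The base case $j=i-1$ is trivial. For the inductive step, apply the long exact sequence in local cohomology to $0\to M_j/M_{i-1}\to M_{j+1}/M_{i-1}\to M_{j+1}/M_j\to 0$: the inductive hypothesis kills the left term, and since $M_{j+1}/M_j$ is Cohen--Macaulay with respect to $Q$ with cohomological dimension $c_{j+1}\geq c_i>k$, the right term also vanishes. Setting $j=r$ yields (b).

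For (b)$\Rightarrow$(c), from (b) we immediately get $\grade(Q, M/M_{i-1})\geq c_i$. For the reverse inequality, take the long exact sequence associated to $0\to M_i/M_{i-1}\to M/M_{i-1}\to M/M_i\to 0$ in degrees $c_i-1$ and $c_i$; hypothesis (b) applied at index $i+1$ (i.e.\ to the quotient $M/M_i$) gives $H^{c_i-1}_Q(M/M_i)=H^{c_i}_Q(M/M_i)=0$ because $c_i<c_{i+1}$ (and both vanish trivially when $i=r$). The long exact sequence then yields $H^{c_i}_Q(M/M_{i-1})\iso H^{c_i}_Q(M_i/M_{i-1})$, and the right-hand side is nonzero by definition of $c_i$, giving equality in (c).

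For (c)$\Rightarrow$(a), I need to prove each $M_i/M_{i-1}$ is Cohen--Macaulay with respect to $Q$; the strict inequalities on cohomological dimensions are automatic from Remark~\ref{F6}. Vanishing above $c_i$ is immediate from $\cd(Q,M_i/M_{i-1})=c_i$. For vanishing below $c_i$, use the long exact sequence of the same short exact sequence in degrees $k-1$ and $k$ with $k<c_i$: hypothesis (c) at index $i+1$ gives $H^{k-1}_Q(M/M_i)=0$ since $k-1<c_i<c_{i+1}$, and hypothesis (c) at index $i$ gives $H^k_Q(M/M_{i-1})=0$ since $k<c_i$. Hence $H^k_Q(M_i/M_{i-1})=0$, as desired.

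The arguments are essentially a single diagram chase recycled three times, so there is no single hard step; the main care needed is the bookkeeping of which index of (b) or (c) one invokes (namely both $i$ and $i+1$) and the use of Remark~\ref{F6} to identify $\cd(Q,M_i/M_{i-1})$ with $\cd(Q,M_i)$ whenever one needs a nonvanishing statement for the quotient in terms of the filtration data.
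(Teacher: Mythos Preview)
Your proof is correct and uses essentially the same approach as the paper: long exact sequence chasing for the filtration quotients, together with Remark~\ref{F6} to identify $\cd(Q,M_i/M_{i-1})$ with $\cd(Q,M_i)$. The organization differs slightly---you close the cycle (a)$\Rightarrow$(b)$\Rightarrow$(c)$\Rightarrow$(a) and, for (a)$\Rightarrow$(b), fix $i$ and run an ascending induction on an auxiliary index $j$ in $M_j/M_{i-1}$, whereas the paper proves (a)$\Leftrightarrow$(b) and (b)$\Leftrightarrow$(c) and handles (a)$\Rightarrow$(b) by decreasing induction on $i$ via $0\to M_{i-1}/M_{i-2}\to M/M_{i-2}\to M/M_{i-1}\to 0$---but the substance is identical, and your direct (c)$\Rightarrow$(a) is arguably a bit cleaner than the paper's (b)$\Rightarrow$(a).
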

\begin{proof}
$(a)\Rightarrow (b)$: We proceed by decreasing induction on $i$. As $D_i/D_{i-1}$ is Cohen--Macaulay with respect to $Q$ for all $i$, thus for $i=r$ we have $H^k_Q(M/D_{r-1})=0$ for  $k<\cd(Q, M)$. Now let $1<i<r$, and assume that  $H^k_Q(M/D_{i-1})=0$ for  $k<\cd(Q, D_i)$.
The exact sequence
\[
0\to D_{i-1}/D_{i-2}\to M/D_{i-2}\to M/D_{i-1}\to 0,
\]
induces the following long exact sequence
\begin{eqnarray}
\label{character}
  \cdots \to H^k_Q(D_{i-1}/D_{i-2})\to H^k_Q(M/D_{i-2})\to H^k_Q(M/D_{i-1})\to \cdots.
\end{eqnarray}
 As $ D_{i-1}/D_{i-2}$ is Cohen--Macaulay with respect to $Q$, we have $ H^k_Q(D_{i-1}/D_{i-2})=0$   for  $k<\cd(Q, D_{i-1})$.
  By Remark \ref{F6},  we have $\cd(Q, D_{i-1})=\cd(Q, D_{i-1}/D_{i-2})<\cd(Q, D_i).$ Hence by using (\ref{character}) and the induction hypothesis, we have  $H^k_Q(M/D_{i-2})=0$ for $k<\cd(Q, D_{i-1})$, as desired.

 $(b) \Rightarrow (a)$:  By Remark \ref{F6} we have $\cd(Q, D_i/D_{i-1})<\cd(Q, D_{i+1}/D_i)$ for all $i$. Thus it suffices to show that  $D_i/D_{i-1}$ is Cohen--Macaulay with respect to $Q$ for all $i$. We prove this statement by decreasing induction on $i$. In condition (b), we first assume $i=r$. It follows that  $M/D_{r-1}$ is Cohen--Macaulay with respect to $Q$. Now let $1<i<r$, and assume that $D_i/D_{i-1}$ is Cohen--Macaulay with respect to $Q$.  The exact sequence
\[
0\to D_{i}/D_{i-1}\to M/D_{i-1}\to M/D_{i}\to 0,
\]
induces the following long exact sequence
\begin{eqnarray}
\label{character2}
  \cdots \to H^{k-1}_Q(D_{i}/D_{i-1})\to H^{k-1}_Q(M/D_{i-1})\to H^{k-1}_Q(M/D_{i})\to\cdots.
\end{eqnarray}
 Suppose  $k<\cd(Q, D_{i-1})$. Induction hypothesis and our assumption say that $ H^{k-1}_Q(D_{i}/D_{i-1})=H^{k-1}_Q(M/D_{i})=0$. Hence  $H^{k-1}_Q(M/D_{i-1})=0$  by (\ref{character2}).  We have $H^k_Q(M/D_{i-2})=0$ for $k<\cd(Q, D_{i-1})$ because of our assumption again. Thus  $H^k_Q(D_{i-1}/D_{i-2})=0$ for $k<\cd(Q, D_{i-1})$ by (\ref{character}). Therefore $D_{i-1}/D_{i-2}$ is Cohen--Macaulay with respect to $Q$, as desired.

 $(b)\Rightarrow (c)$: We set $\cd(Q, D_i)=\cd(Q, D_i/D_{i-1})=q_i$ for $i=1, \dots, r$. Our assumption says that $\grade(Q, M/D_{i-1})\geq q_i$ for $i=1, \dots, r$. We only need to know $H^{q_i}_Q(M/D_{i-1})\neq 0$. Consider the long exact sequence
 \begin{eqnarray}
\label{character3}
  \cdots \to H^{q_i-1}_Q(M/D_{i}) \to  H^{q_i}_Q(D_{i}/D_{i-1})\to H^{q_i}_Q(M/D_{i-1})\to \cdots.
\end{eqnarray}
Since $q_i-1<q_i<q_{i+1}$, it follows from our assumption that $H^{q_i-1}_Q(M/D_{i})=0$. If $H^{q_i}_Q(M/D_{i-1}) =0$, then by (\ref{character3}) we have  $H^{q_i}_Q(D_{i}/D_{i-1})=0$, a contradiction.
 The implication $(c)\Rightarrow (b)$ is obvious.
\end{proof}
As an application of Proposition \ref{Decomposition0} and Proposition \ref{equivalent} we have

\begin{Example}
\label{projective plane}{\em
Let $I$ be the Stanley-Reisner ideal that corresponds to the natural triangulation of the projective plane $\PP^2$. Then
\[
I=(x_1x_2x_3, x_1x_2y_1, x_1x_3y_2, x_1y_1y_3, x_1y_2y_3, x_2x_3y_3, x_2y_1y_2,x_2y_2y_3, x_3y_1y_2,x_3y_1y_3).
\]
We set $R=S/I$ where  $S=K[x_1, x_2, x_3, y_1, y_2, y_3]$, $P=(x_1, x_2, x_3)$ and $Q=(y_1,  y_2, y_3)$.
Our aim is to show that $R$ is sequentially Cohen--Macaulay with respect to $P$ and $Q$.
Note that $R$ is Cohen--Macaulay of dimension 3 if $\chara K\neq 2$.
The ideal $I$ has the minimal primary decomposition $I=\bigcap_{i=1}^{10}\pp_i$ where
$\pp_1=(x_3, y_1, y_3), \pp_2=(x_1, y_1, y_3), \pp_3=(x_2, y_1, y_2), \pp_4=(x_3, y_1, y_2), \pp_5=(x_1, y_2, y_3), \pp_6=(x_2, y_2, y_3), \pp_7=(x_2, x_3, y_3), \pp_8=(x_1, x_2, y_1), \pp_9=(x_1, x_3, y_2),  \pp_{10}=(x_1, x_2, x_3)$. As  $P=\pp_{10}\in \Ass(R)$, we have $\grade(P, R)=0$. By Fact \ref{cd4} we have $\cd(P, R)=2$  and $\cd(Q, R)=3$.
Since $R$ is Cohen--Macaulay, it follows from Fact \ref{cd1} that $\grade(Q, R)=1$. We first show that $R$ is sequentially Cohen--Macaulay with respect to $P$.
By Proposition \ref{Decomposition0},  $R$ has the dimension filtration
\[
0=R_0\varsubsetneq R_1 \varsubsetneq R_2 \varsubsetneq R_3=R,
 \]
 with respect to $P$ where
 \[
 R_1=\bigcap_{i=1}^{9}\pp_i/I \quad \text {and} \quad  R_2=\bigcap_{i=1}^{6}\pp_i/I.
 \]
  By Corollary \ref{ass} we have
  \[
 \Ass(R_1)=\Ass(R)-\Ass(R/R_1)=\{ \pp_{10} \}.
 \]
 and
 \[
 \Ass(R_2)=\Ass(R)-\Ass(R/R_2)=\{\pp_7, \pp_8, \pp_9, \pp_{10} \}.
 \]
  It follows that $\cd(P, R_1)=0$ and $\cd(P, R_2)=1$. We set $I_1=\bigcap_{i=1}^{9}\pp_i$ and $I_2=\bigcap_{i=1}^{6}\pp_i$. In view of Proposition \ref{equivalent}, we need to show
 \[
 \grade(P, R_3/R_0)=\grade(P, R)=\cd(P, R_1)=0,
 \]

\[
\grade(P, R_3/R_1)=\grade(P, S/I_1)=\cd(P, R_2)=1
 \]
  and
 \[
 \grade(P, R_3/R_2)=\grade(P, S/I_2)=\cd(P, R)=2.
 \]
 The first equality is obvious. As $P\not\subseteq \pp_i$ for $i=1, \dots, 9$, we have $\grade(P, S/I_1)\geq 1.$ On the other hand, $\grade(P, S/I_1)\leq \dim S/I_1-\cd(Q, S/I_1)=3-2=1.$ Thus the second equality holds.  In order to show the third equality, we note that $S/I_2$ has dimension 3 and by using CoCoA \cite{Co} depth 2. Thus Fact \ref{cd1} can not be used to compute $\grade(P, S/I_2)$.  We set $\qq_1=\pp_1\cap \pp_2=(x_1x_3, y_1, y_3)$, $\qq_2=\pp_3\cap \pp_4=(x_2x_3, y_1, y_2)$ and $\qq_3=\pp_5\cap \pp_6=(x_1x_2, y_2, y_3)$.
Consider the exact sequence
\[
0\to S/\qq_1\cap \qq_2\to S/\qq_1\dirsum S/\qq_2 \to S/(\qq_1+\qq_2)\to 0.
 \]
 Since $\grade(P, S/\qq_1\dirsum S/\qq_2)=2$ and $\grade(P,S/(\qq_1+\qq_2))=1 $, it follows that $\grade(P, S/(\qq_1\cap \qq_2))\geq 2$. As  $\cd(P, S/(\qq_1\cap \qq_2))= 2$,  we have $\grade(P, S/(\qq_1\cap \qq_2))= 2$.
Consider the exact sequence
\begin{eqnarray}
\label{F7}
0\to S/I_2\to S/\qq_1\cap \qq_2\dirsum S/\qq_3 \to S/(\qq_1+\qq_3)\cap (\qq_2+\qq_3)\to 0.
\end{eqnarray}
The exact sequence
\[
0\to S/(\qq_1+\qq_3)\cap (\qq_2+\qq_3)\to  S/(\qq_1+\qq_3)\dirsum  S/(\qq_2+\qq_3)\to S/(\qq_1+\qq_2+\qq_3)\to 0
 \]
 yields that $\grade(P, S/(\qq_1+\qq_3)\cap (\qq_2+\qq_3))\geq 1.$ Hence by (\ref{F7}) we have $\grade(P, S/I_2)\geq 2$. As $\cd(P, S/I_2)=2$, we conclude that $\grade(P, S/I_2)= 2$, as desired.

 Next, we show that $R$ is sequentially Cohen--Macaulay  with respect to $Q$. By Proposition \ref{Decomposition0},   $R$ has the dimension filtration $0=R_0\varsubsetneq R_1 \varsubsetneq R_2 \varsubsetneq R_3=R$ with respect to $Q$ where $R_1=\bigcap_{i=7}^{10}\pp_i/I$ and $R_2=\pp_{10}/I$.  By Corollary \ref{ass} we have $\cd(Q, R_1)=1$ and $\cd(Q, R_2)=2$.  We set $J=\bigcap_{i=7}^{10}\pp_i$.
In view of Proposition \ref{equivalent},  we need to show
 \[
 \grade(Q, R_3/R_0)=\grade(Q, R)=\cd(Q, R_1)=1,
 \]

 \[
 \grade(Q, R_3/R_1)=\grade(Q, S/J)=\cd(Q, R_2)=2
 \]
 and
\[
\grade(Q, R_3/R_2)=\grade(Q, S/\pp_{10})=\cd(Q, R)=3.
 \]
  The first and the third statements are obvious. In order to prove the second equality, consider the exact sequence
\begin{eqnarray}
\label{F8}
0\to S/J\to S/\cap_{i=7}^{9}\pp_i\dirsum S/\pp_{10}\to S/\cap_{i=7}^{9}(\pp_i+\pp_{10})\to 0.
\end{eqnarray}
An exact sequence argument shows that
\[
\grade(Q, S/\cap_{i=7}^{9}\pp_i)=\grade(Q, S/\cap_{i=7}^{9}(\pp_i+\pp_{10}))=2.
\]
Thus it follows from (\ref{F8}) that $\grade(Q, S/J)\geq 2$. On the other hand,
\[
\grade(Q, S/J)\leq \dim S/J-\cd(P, S/J)=3-1=2.
\]
 Therefore, $\grade(Q, S/J)= 2$, as desired.
}
\end{Example}

\section{Cohen--Macaulay modules that are sequentially Cohen--Macaulay with respect to $Q$}
In \cite{AR2} we have shown that if $M$ is a finitely generated bigraded Cohen--Macaulay $S$-module which is Cohen--Macaulay with respect to $P$, then $M$ is Cohen--Macaulay with respect to $Q$. Inspired by this fact and Example \ref{projective plane} we have the following question
\begin{Question}
Let $I\subseteq S$ be a monomial ideal. Suppose $S/I$ is Cohen--Macaulay. If $S/I$ is sequentially Cohen--Macauly with respect to $P$, is $S/I$ sequentially Cohen--Macaulay with respect to $Q$?
\end{Question}
We do not know the answer of this question yet, however in this section, we obtain some properties of a Cohen--Macaulay filtration with respect to $Q$ in general provided that the module itself is Cohen--Macaulay.

\begin{Fact}
\label{cd5}{\em
For a Cohen--Macaulay filtration $\mathcal{F}$ with respect to $Q$ we recall the following fact from \cite[Corollary 1.8]{AR3}
\[
\grade(Q, M_i)=\grade(Q, M) \quad \text{for}\quad   i=1, \dots, r.
\]}
\end{Fact}

\begin{Proposition}
\label{cm}
Let $M$ be a finitely generated bigraded Cohen--Macaulay $S$-module with $ |K|=\infty$. Suppose $M$ is sequentially Cohen--Macaulay with respect to $Q$ with the Cohen--Macaulay filtration
$0=M_0\varsubsetneq M_1 \varsubsetneq
 \dots  \varsubsetneq M_r=M$ with respect to $Q$. Then
  \begin{itemize}
\item[{(a)}]   $\cd(P, M_i)=\cd(P, M)$ for $i=1, \dots, r.$
\item[{(b)}]  $\grade(Q, M_i)+\cd(P, M_i)=\dim M_i$ for $i=1, \dots, r.$
\end{itemize}
\end{Proposition}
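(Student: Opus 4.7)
The plan is to prove (a) first by decreasing induction on $i$, and then to derive (b) as an immediate algebraic consequence of (a), Equation (\ref{grade}), and the Cohen--Macaulay identity for $M$. The pivotal point in (a) will be a careful comparison of $\cd(P, \cdot)$ across the short exact sequence $0 \to M_i \to M_{i+1} \to M_{i+1}/M_i \to 0$ via (\ref{F3}), exploiting the fact that the quotient has pinned-down $P$-cohomological dimension.

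First I would record two preparatory equalities that hold at every index. Since $M$ is Cohen--Macaulay it is unmixed, so every $\pp \in \Ass(M)$ satisfies $\dim(S/\pp) = \dim M$. Corollary \ref{ass} gives $\Ass(M_i) = B_i \subseteq \Ass(M)$, and the inclusion $M_i/M_{i-1} \hookrightarrow M/M_{i-1}$ combined with the same corollary yields $\Ass(M_i/M_{i-1}) \subseteq \Ass(M) - B_{i-1} \subseteq \Ass(M)$. Consequently $\dim M_i = \dim(M_i/M_{i-1}) = \dim M$ for every $i$. Next, applying (\ref{F1}) with the roles of $P$ and $Q$ interchanged---legitimate because the bigraded setup is symmetric in $P$ and $Q$---to the Cohen--Macaulay module $M$ gives the identity $\cd(P, M) = \dim M - \grade(Q, M)$.

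For (a), the base case $i = r$ is trivial. In the inductive step (\ref{F3}) applied to the short exact sequence above yields
\[
\cd(P, M_{i+1}) = \max\{\cd(P, M_i),\, \cd(P, M_{i+1}/M_i)\}.
\]
Because $M_{i+1}/M_i$ is Cohen--Macaulay with respect to $Q$ and $|K| = \infty$, formula (\ref{F2}) together with $\dim(M_{i+1}/M_i) = \dim M$ gives $\cd(P, M_{i+1}/M_i) = \dim M - \cd(Q, M_{i+1}/M_i)$. Combined with the identity for $\cd(P, M)$ recorded above, the desired strict inequality $\cd(P, M_{i+1}/M_i) < \cd(P, M)$ reduces to $\cd(Q, M_{i+1}/M_i) > \grade(Q, M)$. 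This in turn holds because $M_1 = M_1/M_0$ is Cohen--Macaulay with respect to $Q$, so $\cd(Q, M_1/M_0) = \grade(Q, M_1) = \grade(Q, M)$ by (\ref{grade}), and Remark \ref{F6} supplies the strict chain $\cd(Q, M_1/M_0) < \cd(Q, M_2/M_1) < \dots < \cd(Q, M_r/M_{r-1})$, placing $\cd(Q, M_{i+1}/M_i) > \grade(Q, M)$ whenever $i \geq 1$. Hence the maximum is attained at $\cd(P, M_i)$, and the inductive hypothesis gives $\cd(P, M_i) = \cd(P, M_{i+1}) = \cd(P, M)$; iterating down to $i = 1$ completes the proof of (a).

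Part (b) is then immediate: assembling (a), (\ref{grade}), the equality $\dim M_i = \dim M$, and the Cohen--Macaulay identity $\grade(Q, M) + \cd(P, M) = \dim M$ gives $\grade(Q, M_i) + \cd(P, M_i) = \grade(Q, M) + \cd(P, M) = \dim M = \dim M_i$, as required. The main obstacle I anticipate is establishing the strict inequality $\cd(Q, M_{i+1}/M_i) > \grade(Q, M)$ cleanly; everything hinges on recognizing that $\grade(Q, M)$ coincides with the smallest $\cd(Q, \cdot)$-value appearing in the Cohen--Macaulay filtration, namely $\cd(Q, M_1/M_0)$, and on the constancy statement (\ref{grade}).
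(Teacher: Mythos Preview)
Your argument is correct, but the paper's proof of (a) is organized differently and is considerably shorter. Rather than running a decreasing induction and controlling each quotient $M_{i+1}/M_i$, the paper computes $\cd(P,M_1)$ directly: since $M_1$ is Cohen--Macaulay with respect to $Q$, formula (\ref{F2}) gives $\cd(P,M_1)=\dim M_1-\cd(Q,M_1)=\dim M-\grade(Q,M)=\cd(P,M)$, using (\ref{grade}), $\dim M_1=\dim M$ (cited from \cite[Lemma~1.11]{AR3}), and the $P$/$Q$-swapped form of (\ref{F1}) for the Cohen--Macaulay module $M$. Then (\ref{F3}) gives the monotone chain $\cd(P,M_1)\le\cd(P,M_2)\le\cdots\le\cd(P,M_r)=\cd(P,M)$, so all terms are equal. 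Your approach instead pins down $\cd(P,M_{i+1}/M_i)$ via (\ref{F2}) and shows it is strictly below $\cd(P,M)$, forcing the maximum in (\ref{F3}) onto $\cd(P,M_i)$; this is valid but requires the extra step of establishing the strict inequality $\cd(Q,M_{i+1}/M_i)>\grade(Q,M)$ at every stage. Both routes rely on the same ingredients---(\ref{F1}), (\ref{F2}), (\ref{F3}), (\ref{grade}), and $\dim M_i=\dim M$---so the difference is purely organizational: the paper squeezes from the bottom, you descend from the top. Part (b) is handled identically in both.
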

\begin{proof}
 In order to prove (a), since $M_1$ is Cohen--Macaulay with respect to $Q$, it follows from Fact \ref{cd2} that $\cd(P, M_1)+\cd(Q, M_1)=\dim M_1.$ By Fact \ref{cd5} we have $\cd(Q, M_1)=\grade(Q, M_1)=\grade(Q, M).$  Since $M$ is Cohen--Macaulay, it follows from ~\cite[Lemma 1.11]{AR3} that $\dim M_1=\dim M$ and $\cd(P, M)=\dim M-\grade(Q, M)$ by Fact \ref{cd1}. Thus we conclude that $\cd(P, M_1)=\cd(P, M).$ As by Fact \ref{cd3} we have $\cd(P, M_{i-1})\leq \cd(P, M_{i})$ for all $i$, the first equality follows.

 For the proof (b),   by \cite[Lemma 1.11]{AR3} we have $\dim M_i=\dim M$  for $i=1, \dots, r$. Thus the second equalities  follow from Fact \ref{cd1}, Fact \ref{cd5}  and part (a).
 \end{proof}

\begin{Proposition}
\label{cm1}
Let the assumptions and the notation be as in Proposition \ref{cm}. Then the following statements are equivalent:
\begin{itemize}
\item[{(a)}]  $\cd(P, M)+\cd(Q, M)=\dim M +r-1$;
\item[{(b)}]  $H^s_Q(M)\neq 0$ for all $\grade(Q, M)\leq s\leq \cd(Q, M).$
\end{itemize}
\begin{proof}
We first assume that $r=1$. As $M$ is Cohen--Macaulay,  by Fact \ref{cd1} and Fact \ref{cd2} we have  $\cd(P, M)+\cd(Q, M)=\dim M$ if and only if  $M$ is Cohen--Macaulay with respect to $Q$. Thus the claim holds in this case. Now let $r\geq 2$.  By Fact \ref{cd1} we have $\cd(P, M)+\cd(Q, M)=\dim M +r-1$ if and only if $\cd(Q, M)-\grade(Q, M)=r-1$. This is equivalent to saying that $\cd(Q, M_{i+1})=\cd(Q, M_i)+1$ for $i=1, \dots, r-1$ by Fact \ref{cd5}. By \cite[Proposition 1.7]{AR3} this is equivalent to saying that $H^s_Q(M)\neq 0$ for all $\grade(Q, M)\leq s\leq \cd(Q, M).$
\end{proof}

\end{Proposition}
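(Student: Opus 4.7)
The plan is to translate each of (a) and (b) into a statement about the strict chain $\cd(Q,M_1)<\cd(Q,M_2)<\cdots<\cd(Q,M_r)$ of cohomological dimensions along the Cohen--Macaulay filtration, and then to check that both translations amount to saying that consecutive members of this chain differ by exactly one.

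The case $r=1$ is disposed of immediately: here $M=M_1$ is Cohen--Macaulay with respect to $Q$, so (\ref{F2}) gives (a) directly, while $\grade(Q,M)=\cd(Q,M)$ makes (b) trivial. For $r\geq 2$, the key identity I would establish first is
\[
\cd(P,M)+\grade(Q,M)=\dim M.
\]
This follows by applying Proposition~\ref{cm}(a) to $M_1$ (so that $\cd(P,M_1)=\cd(P,M)$), then using that $M_1$ is Cohen--Macaulay with respect to $Q$ together with (\ref{F2}) to write $\cd(P,M_1)+\cd(Q,M_1)=\dim M_1$, and finally invoking (\ref{grade}) and \cite[Lemma 1.11]{AR3} to replace $\cd(Q,M_1)$ by $\grade(Q,M)$ and $\dim M_1$ by $\dim M$.

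Substituting this identity into (a) rewrites it as $\cd(Q,M)-\grade(Q,M)=r-1$. Since $\cd(Q,M_1)=\grade(Q,M_1)=\grade(Q,M)$ by (\ref{grade}) and the Cohen--Macaulayness of $M_1$ with respect to $Q$, and since $\cd(Q,M_r)=\cd(Q,M)$, the strict increase of the $\cd(Q,M_i)$ gives $\cd(Q,M)-\grade(Q,M)\geq r-1$, with equality precisely when $\cd(Q,M_{i+1})=\cd(Q,M_i)+1$ for every $i=1,\ldots,r-1$. Finally, I would appeal to \cite[Proposition 1.7]{AR3}, which characterizes the non-vanishing range of $H^s_Q(M)$ in terms of exactly these successive differences, thereby showing that (b) is also equivalent to the condition that all jumps equal one.

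The step I expect to need the most care is the derivation of $\cd(P,M)+\grade(Q,M)=\dim M$, since it must combine the global Cohen--Macaulay hypothesis on $M$ with the filtration information and the bigraded formulas (\ref{F1})--(\ref{F2}); once this identity is in hand, the rest is bookkeeping on the strictly increasing sequence $\{\cd(Q,M_i)\}$ together with an invocation of the cited result from \cite{AR3}.
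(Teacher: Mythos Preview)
Your proposal is correct and follows essentially the same route as the paper: both reduce (a) to $\cd(Q,M)-\grade(Q,M)=r-1$, interpret this as saying all jumps $\cd(Q,M_{i+1})-\cd(Q,M_i)$ equal one, and then invoke \cite[Proposition~1.7]{AR3} for the equivalence with (b). The only difference is that the identity $\cd(P,M)+\grade(Q,M)=\dim M$, which you derive via $M_1$ and Proposition~\ref{cm}, is obtained in the paper in one stroke from (\ref{F1}) (with the roles of $P$ and $Q$ interchanged) using only that $M$ is Cohen--Macaulay; so the step you flagged as most delicate is in fact immediate.
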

The following example shows that the condition that ''$M$ is Cohen--Macaulay" is required for Proposition \ref{cm1}.
\begin{Example}{\em
We set $K[x]=K[x_1, \dots, x_m]$ and $K[y]=K[y_1, \dots, y_n]$. Let $L$ be a non-zero finitely generated graded $K[x]$-module of depth 0 and dimension 1, and
 $N$ a non-zero finitely generated graded $K[y]$-module of depth 0 and dimension 1. We set $M=L\tensor_KN$ and consider it as $S$-module. One has $\depth M=0$ and $\dim M=2$. Hence $M$ is not Cohen--Macaulay.
 On the other hand,  $\grade(Q, M)=\depth N=0$ and $\cd(Q, M)=\dim N=1=\dim L=\cd(P, M)$. Hence $M$ is sequentially Cohen--Macaulay with respect to $Q$ which satisfies condition (b) in Proposition \ref{cm1}, while the equality (a) does not hold.}
\end{Example}
The following question is inspired by Proposition \ref{cm1}.
\begin{Question}
\label{nonvanishing} {\em
Let $M$ be a finitely generated bigraded Cohen--Macaulay $S$-module such that $H^k_Q(M)\neq 0$ for all $\grade(Q, M)\leq k\leq \cd(Q, M).$ Is $H^s_P(M)\neq 0$ for all $\grade(P, M)\leq s\leq \cd(P, M)?$ }
\end{Question}
\begin{Remark}{\em
Of course the question has positive answer in the following cases, namely, if $M$ has only one(two) non-vanishing local cohomology with respect to $Q$. This immediately follows by Fact \ref{cd1}. The projective plane $\PP^2$  given in Example \ref{projective plane} is also the case as module with three non-vanishing local cohomology. }
\end{Remark}

\end{document}